\newtheorem{theorem}{Theorem}
\newtheorem{corollary}[theorem]{Corollary}
\newcommand{\Fq}{\mathbb{F}_q}
\newcommand{\Fqn}{\mathbb{F}_q^n}
\title{Sets Avoiding Full-Rank Three-Point Patterns in $(\mathbb{F}_q^n)^k$ are Exponentially Small}
\author{Mohamed Omar}
\thanks{Harvey Mudd College, Department of Mathematics, Claremont, CA. Email: omar@g.hmc.edu. This work was supported by the AMS Claytor-Gilmer Fellowship and the Karen-EDGE Fellowship.}
\date{\today}
\begin{document}

\maketitle

\begin{abstract}
We prove that if a subset of  $(\mathbb{F}_q^n)^k$ (with $q$ an odd prime power) avoids a full-rank three-point pattern  $\vec{x},\vec{x}+M_1\vec{d},\vec{x}+M_2\vec{d}$ then it is exponentially small, having size at most $3 \cdot c_q^{nk}$ where $0.8414 q \leq c_q \leq 0.9184 q$. This generalizes a theorem of Kova\u{c} and complements results of Berger, Sah, Sawhney and Tidor. As a consequence, we prove that if $3$ is a square in $\Fq$ then subsets of $(\Fqn)^2$ avoiding equilateral triangles are exponentially small.
\end{abstract}


\section{Introduction}

Given $M_1,M_2,\ldots,M_{r-1} \in \Fq^{k \times k}$, the \emph{$r$-point pattern generated by $M_1,\ldots,M_{r-1}$} in $X=\Fqn$ is the set $S \subseteq X^k$  of all non-trivial $r$-tuples of the form $\vec{x}, \ \vec{x}+M_1 \vec{d}, \ \ldots, \ \vec{x}+M_{r-1} \vec{d}$ ranging over $\vec{x},\vec{d} \in X^k$, non-trivial here meaning the $r$ vectors are distinct. Understanding sizes of sets containing or avoiding specific $r$-point patterns is ubiquitous in the literature, many of the pertinent examples coming from the finite field model in arithmetic combinatorics. 

Indeed, a celebrated theorem Frustenberg and Katznelson \cite{furstenberg1979ergodic} which is a generalization of a classic theorem of Szemer\'{e}di states that if a set $A \subseteq \{1,2,\ldots,N\}^k$ avoids a nontrivial homothetic copy of a finite set $Y \subset \mathbb{Z}^k$, then $|A|=o(N^k)$. Effective bounds for specific sets $Y$ have served to be challenging, but a common avenue for insight comes from moving to a finite field model: asking if $A \subseteq X^k$ (here $X=\Fqn$) avoids a homothetic copy of a finite set $Y \subset X^k$, then what effective bounds are there on $|A|$ that can refine the generic $o(q^{nk})$? Such questions are relevant because homothetic copies of a given set can typically be written as $r$-point patterns. For instance, in a recent breakthrough of Peluse \cite{peluse2022subsets}, it was proven that if $A \subseteq (\Fqn)^2$ (with $q$ prime) avoids the four-point pattern generated by \[M_1=\begin{pmatrix} 0 & 0 \\ 0 & 1 \end{pmatrix}, \ M_2=\begin{pmatrix} 0 & 0 \\ 0 & 2 \end{pmatrix}, \ M_3=\begin{pmatrix} 0 & 1 \\ 0 & 0 \end{pmatrix},\] 
then $|A| \leq \frac{|X|^2}{\log_m |X|}$ for some large $m$. Viewing the four-point pattern as living in the $2$-dimensional set $X^2$, it is  the set of homothetic copies of 
\[\left \{ \begin{pmatrix} 0 \\ 0 \end{pmatrix}, \begin{pmatrix} 0 \\ 1 \end{pmatrix}, \begin{pmatrix} 0 \\ 2 \end{pmatrix}, \begin{pmatrix} 1 \\ 0 \end{pmatrix} \right \}\] 
in $(\Fqn)^2$.

Beyond arithmetic combinatorics and relating to a problem in Ergodic theory,  Berger, Sah, Sawhney and Tidor \cite{berger2022popular} studied \emph{full-rank} three-point patterns, that is three-point patterns $\vec{x},\vec{x}+M_1\vec{d},\vec{x}+M_2\vec{d}$ where $M_1,M_2 \in \Fq^{k \times k}$ and $M_1-M_2$ are all invertible. They proved that such patterns have a popular difference, meaning if $A \subseteq X^k$ with $|A| \geq \alpha \cdot |X|^k$ for $\alpha \in (0,1)$ then there is some $\vec{d} \neq \vec{0}$ so that $\left \{ \vec{x} \in X^k \ : \ \vec{x},\vec{x}+M_1\vec{d},\vec{x}+M_2\vec{d} \in A \right \}$ has size at least $(\alpha^3-o(1)) \cdot |X|^k$. Their result is in fact more general, with $X$ replaced by a general finite abelian group and $M_1,M_2,M_1-M_2$ being automorphisms of the group. This discovery generalized the work of Kova\u{c} \cite{kovac} who proved the existence of a popular difference for $X=\Fq^n$ with the specific three-point pattern generated by 
\begin{equation}\label{eqn:generators}
M_1=\begin{pmatrix} 1 & 0 \\ 0 & 1 \end{pmatrix}, \ M_2=\begin{pmatrix} 0 & -1 \\ 1 & 0 \end{pmatrix},
\end{equation}
which encompasses precisely all right-angled isosceles triangles in $(\mathbb{F}_q^n)^2$ (again viewed as $2$-dimensional here). 

In this same spirit, we generalize a complementary facet of Kova\u{c}'s work. In  \cite{kovac}, Kova\u{c} later proves that subsets of $X^k$ avoiding the three-point pattern generated by the matrices in (\ref{eqn:generators}) are exponentially small, with size at most $3 \cdot c_q^{2n}$ where $1 \leq c_q < q$. Our main theorem generalizes this result to arbitrary full-rank three-point patterns.

\begin{theorem}[Main Theorem]\label{thm:main}
If $A \subseteq (\Fqn)^k$ avoids a full-rank three-point pattern $\vec{x},\vec{x}+M_1\vec{d},\vec{x}+M_2\vec{d}$ then $|A| \leq 3 \cdot c_q^{kn}$ where $0.8414q \leq c_q \leq 0.9184q$.
\end{theorem}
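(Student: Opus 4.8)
The plan is to run the Croot--Lev--Pach slice-rank polynomial method, as in Kova\u{c}'s argument, but with the specific pair in (\ref{eqn:generators}) replaced by an arbitrary full-rank pair. Identify $(\Fqn)^k$ with the $\Fq$-vector space $\Fq^{nk}$; under this identification $M_i$ acts as $M_i\otimes I_n$, which is invertible over $\Fq$ because $M_i$ is, and likewise $(M_1-M_2)\otimes I_n$ is invertible. Writing a candidate pattern as a triple $(\vec a,\vec b,\vec c)$ with $\vec c=(I-N)\vec a+N\vec b$, where $N=M_2M_1^{-1}$, one has that $N$, $I-N=(M_1-M_2)M_1^{-1}$ and $I$ are all invertible, so whenever $\vec a\neq\vec b$ the three points $\vec a,\vec b,\vec c$ are automatically distinct; hence $A$ avoids the pattern exactly when the only solutions of $(I-N)\vec a+N\vec b-\vec c=\vec 0$ with $\vec a,\vec b,\vec c\in A$ are the trivial ones $\vec a=\vec b=\vec c$.

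Given this, I would define $F\colon A^3\to\Fq$ by
\[
F(\vec a,\vec b,\vec c)=\prod_{j=1}^{nk}\Bigl(1-\bigl((I-N)\vec a+N\vec b-\vec c\bigr)_j^{\,q-1}\Bigr).
\]
Since $t^{q-1}\in\{0,1\}$ for $t\in\Fq$, this is the indicator of $(I-N)\vec a+N\vec b=\vec c$, so by the previous paragraph $F(\vec a,\vec b,\vec c)=\mathbbm 1[\vec a=\vec b=\vec c]$ on $A^3$; Tao's diagonal lemma then gives that the slice rank of $F$ is exactly $|A|$. On the other hand every coordinate of $(I-N)\vec a+N\vec b-\vec c$ is an $\Fq$-linear form in the $3nk$ coordinates of $\vec a,\vec b,\vec c$, so $F$ is a polynomial of total degree at most $nk(q-1)$, and after reducing exponents via $t^q=t$ it is a combination of monomials in which each of the $3nk$ variables has degree at most $q-1$ and the total degree is at most $nk(q-1)$. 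Each such monomial has degree at most $nk(q-1)/3$ in at least one of the three blocks of $nk$ variables; grouping monomials by a choice of such a block expresses $F$ as a sum of at most $3\Gamma$ products (function of one block)$\cdot$(function of the other two), where
\[
\Gamma=\#\Bigl\{(e_1,\dots,e_{nk})\in\{0,\dots,q-1\}^{nk}\ :\ \textstyle\sum_{i}e_i\le nk(q-1)/3\Bigr\}.
\]
Hence $|A|=\text{slice rank}(F)\le 3\Gamma$.

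It remains to estimate $\Gamma$. By Cram\'er's theorem (large deviations for a sum of $nk$ i.i.d.\ uniform variables on $\{0,\dots,q-1\}$, whose mean $(q-1)/2$ exceeds the threshold $(q-1)/3$), the exponential growth rate of $\Gamma$ is
\[
c_q:=\min_{0<x<1}\ x^{-(q-1)/3}\cdot\frac{1-x^q}{1-x},
\]
the bound $\Gamma\le c_q^{nk}$ coming from the Chernoff inequality $\mathbbm 1[\sum e_i\le nk(q-1)/3]\le x^{\sum e_i-nk(q-1)/3}$ summed over all exponent vectors, and the matching lower bound from a local central limit estimate (tilt the uniform law to mean $(q-1)/3$ and count exponent vectors in a window of width $O(\sqrt{nk}\log nk)$, a correction that is subexponential in $nk$). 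This yields $|A|\le 3c_q^{nk}$, and the theorem reduces to the numerical claim $0.8414q\le c_q\le 0.9184q$. The minimizing $x$ is the unique $x_q\in(0,1)$ with $\sum_{e=0}^{q-1}e\,x_q^{\,e}=\tfrac{q-1}{3}\sum_{e=0}^{q-1}x_q^{\,e}$; a calculus computation then shows that $q\mapsto c_q/q$ attains its maximum over odd prime powers at $q=3$, where $x_3=\tfrac{-1+\sqrt{33}}{8}$ and $c_3/3=x_3^{-2/3}(1+x_3+x_3^{2})/3=0.9184\ldots$, and that, substituting $x_q=1-u/q$, it decreases to $\min_{u>0}e^{u/3}\tfrac{1-e^{-u}}{u}=0.8414\ldots$ as $q\to\infty$; the two inequalities follow.

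I expect the polynomial-method skeleton to be essentially routine once the full-rank hypothesis is recognized as exactly what forces non-triviality, so the main obstacle is the quantitative part: establishing that $\Gamma^{1/(nk)}\to c_q$ (the lower direction genuinely needs a local-limit argument, not merely Chernoff) and carrying out the calculus that shows $q\mapsto c_q/q$ is maximized at $q=3$ with limit $0.8414\ldots$. For the inequality $|A|\le 3c_q^{nk}$ alone only the Chernoff upper bound on $\Gamma$ is needed; the lower estimate is what certifies that the base $0.8414q$ cannot be improved by this method.
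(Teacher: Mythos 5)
Your proof follows essentially the same slice-rank argument as the paper: you build the indicator polynomial for the linear relation $\vec c=(I-N)\vec a+N\vec b$ (equivalent to the paper's $f_{i,j,\ell}$ formulation), use the full-rank hypothesis to show the tensor is diagonal on $A^3$, and bound the slice rank by counting monomials with one block of degree $\le nk(q-1)/3$. The only cosmetic difference is that you sketch the numerical estimate $0.8414q\le c_q\le 0.9184q$ via a calculus/Chernoff argument, whereas the paper simply cites Proposition~4.12 of Blasiak et al.\ and Lemma~5 of Naslund for this; also, the local-CLT lower bound you mention is unnecessary for the stated theorem, as you yourself observe.
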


A particularly interesting consequence is Corollary~\ref{cor:equilateral}: when $q$ is an odd prime power and $3$ is a (non-zero) square in $\Fq$, subsets of $(\mathbb{F}_q^n)^2$ avoiding equilateral triangles are exponentially small.

Our proof of Theorem~\ref{thm:main} hinges on the slice rank polynomial method. This method has been successfully used in many disparate settings to establish upper bound on sizes of sets avoiding certain properties. Croot, Lev and Pach \cite{crootlevpach} used it in a breakthrough result to  show that subsets of $\mathbb{Z}_4^n$ avoiding $3$-term arithmetic progressions are exponentially small. This technique was used by Ellenberg and Gijiswit \cite{capset} to resolve the long-standing Cap Set conjecture, proving that subsets of $\mathbb{F}_3^n$ avoiding $3$-term arithmetic progressions are exponentially small. Observe that both these problems ask for sets avoiding a three-point pattern $\vec{x},\vec{x}+M_1\vec{d},\vec{x}+M_2\vec{d}$ in $X^1$ where $X=\mathbb{Z}_4^n$ or $\mathbb{F}_3^n$ respectively, $M_1=[1]$ and $M_2=[2]$. Ge and Shangguan used slice rank to prove polynomial bounds for subsets of $\Fqn$ avoiding right corners \cite{ge2020maximum}. In addition to the aforementioned work of Kova\u{c} on sets in $(\Fqn)^2$ avoiding right isosceles triangles, the advantage of Theorem~\ref{thm:main} over many of these discoveries is that it unifies several geometric configurations in a universal paradigm. We point to some particular examples, including isosceles triangles with prescribed angles and equilateral triangles, in the discussion after the proof of Theorem~\ref{thm:main}.

We now introduce the slice rank method. Let $X_1,\ldots,X_k$ be sets and $T:X_1 \times \cdots \times X_k \to \mathbb{F}$ for some field $\mathbb{F}$. We say $T$ is a \emph{slice} if it can be written as $T=T' \cdot T''$ where $T':X_i \to \mathbb{F}$ for some $i$ and $T'': X_1 \times \cdots \times X_{i-1} \times X_{i+1} \times \cdots \times X_k \to \mathbb{F}$. The \emph{slice rank} of $T$, denoted $\emph{slice-rank}(T)$ is
\[
\mbox{slice-rank}(T) = \min \left \{r  :  T=\sum_{i=1}^r T^{(i)} \mbox{ where } T^{(i)}:X_1 \times \cdots \times X_k \to \mathbb{F} \mbox{ is a slice for each } i \right \}.
\]
For instance one can show that $T:\mathbb{F}_3^3 \to \mathbb{F}_3$ given by $T(x,y,z)=xy+xz+yz$ is not a slice but since $T(x,y,z)=x(y+z)+y(z)$, it has slice rank $2$. The fundamental benefit of the slice rank construct lies in the following theorem due to Tao:

\begin{theorem}\cite{tao2016notes1}\label{thm:slicerank}
Given a set $A$, let $T:A^k \to \mathbb{F}$ for some field $\mathbb{F}$ and suppose $T(a_1,\ldots,a_k) \neq 0$ if and only if the $a_1=\cdots=a_k$. Then $|A| = \mbox{slice-rank}(T).$
\end{theorem}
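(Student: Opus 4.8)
The plan is to establish the two inequalities $\mbox{slice-rank}(T)\le|A|$ and $\mbox{slice-rank}(T)\ge|A|$ separately; the first is routine and the second carries the content. For the upper bound, write $c_a:=T(a,\dots,a)$, which is nonzero by hypothesis; then the hypothesis on $T$ forces
\[
T(x_1,\dots,x_k)=\sum_{a\in A}c_a\,\mathbf{1}[x_1=a]\cdot\bigl(\mathbf{1}[x_2=a]\cdots\mathbf{1}[x_k=a]\bigr),
\]
and each summand is a slice (the first factor depends only on $x_1$ and the second only on $x_2,\dots,x_k$), so $\mbox{slice-rank}(T)\le|A|$.

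For the lower bound I would induct on the number of variables $k$, which we may take to be at least $2$. The base case $k=2$ merely records that the slice rank of a function on $A\times A$ is the rank of the associated matrix, and under the hypothesis this matrix is diagonal with nonvanishing diagonal entries, hence of rank $|A|$. For the inductive step ($k\ge3$), let $T=\sum_{i=1}^{r}T^{(i)}$ be an arbitrary decomposition into slices; the goal is $r\ge|A|$. Let $S_1$ be the set of indices $i$ for which $T^{(i)}$ factors through the first coordinate, so that $T^{(i)}=f_i(x_1)\,g_i(x_2,\dots,x_k)$ for $i\in S_1$, and put $W=\mathrm{span}\{f_i:i\in S_1\}\subseteq\mathbb{F}^{A}$ and $\Phi=W^{\perp}$. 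If $\Phi=0$ then $|S_1|\ge\dim W=|A|$, so $r\ge|A|$ and we are done; hence assume $\dim\Phi\ge1$.

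The heart of the argument is to contract the first coordinate against a well-chosen $\phi\in\Phi$. For $\phi\in\mathbb{F}^{A}$ define $T_\phi:A^{k-1}\to\mathbb{F}$ by $T_\phi(x_2,\dots,x_k)=\sum_{a\in A}\phi(a)\,T(a,x_2,\dots,x_k)$. If $\phi\in\Phi$, then every slice with index in $S_1$ is annihilated (because $\sum_a\phi(a)f_i(a)=0$), while each of the remaining $r-|S_1|$ slices contracts to a slice in the variables $x_2,\dots,x_k$; hence $\mbox{slice-rank}(T_\phi)\le r-|S_1|$. On the other hand, $T(a,x_2,\dots,x_k)\ne0$ forces $a=x_2=\dots=x_k$, so $T_\phi$ vanishes off the diagonal of $A^{k-1}$ and satisfies $T_\phi(a,\dots,a)=\phi(a)c_a$, which is nonzero precisely on $\mathrm{supp}(\phi)$. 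Since slice rank cannot increase under restriction to a sub-box, restricting $T_\phi$ to the sub-box $\bigl(\mathrm{supp}(\phi)\bigr)^{k-1}$ and applying the inductive hypothesis gives $\mbox{slice-rank}(T_\phi)\ge|\mathrm{supp}(\phi)|$. Combining the two bounds yields $r\ge|S_1|+|\mathrm{supp}(\phi)|$ for every $\phi\in\Phi$. Finally, I choose $\phi$ to have large support via the elementary fact that any subspace $V\subseteq\mathbb{F}^{A}$ contains a vector whose support has size at least $\dim V$: take a basis of $V$ in reduced row echelon form and add the basis vectors, obtaining a vector that is nonzero in each of the $\dim V$ pivot columns. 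Applying this to $V=\Phi$, whose dimension is $|A|-\dim W\ge|A|-|S_1|$, produces $\phi\in\Phi$ with $|\mathrm{supp}(\phi)|\ge|A|-|S_1|$, and therefore $r\ge|A|$, completing the induction.

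The step I expect to be the genuine obstacle is this last coordination: the functional $\phi$ must at once annihilate every coordinate-$1$ slice --- which forces $\phi\in W^{\perp}$ --- and carry enough support to make up for the $|S_1|$ terms discarded when the inductive hypothesis is invoked; the reduced-row-echelon-form observation is exactly what makes these two requirements compatible over an arbitrary field. The remaining ingredients --- the upper bound, the base case, and the check that contraction sends slices to slices --- are routine bookkeeping.
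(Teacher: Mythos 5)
Your argument is correct, and it is essentially the proof from the cited source \cite{tao2016notes1}, which the paper invokes without reproducing: the same upper-bound decomposition, the same induction on $k$, the same contraction of the first coordinate against a functional $\phi$ annihilating the coordinate-$1$ slices, and the same row-echelon observation to secure $|\mathrm{supp}(\phi)|\ge\dim\Phi$ over an arbitrary field.
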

If a set of $k$-tuples from a set $A$ avoids a certain property, the slice rank is typically used to bound the size of $A$ in the following way: construct a tensor $T:A^k \to \mathbb{F}$ that is diagonal, and subsequently bound $\mbox{slice-rank}(T)$. In our proof of Theorem~\ref{thm:main} we do this, and show that the natural approach in fact only works for three-point patterns.

\section{Main Result}
We prove our main theorem, Theorem~\ref{thm:main}, and provide examples extending the work of Kova\u{c} \cite{kovac}.
\begin{proof}[Proof of Theorem~\ref{thm:main}]
We proceed with the proof assuming we are given an $r$-point pattern generated by matrices $M_1,M_2,\ldots,M_{r-1}$ with $r \geq 3$, $M_i$ is invertible for each $i$ and $M_i-M_j$ invertible for all $i>j$. We then show that the natural slice rank approach works only when $r=3$. This $r$-point pattern is the set of all distinct $r$-tuples $\vec{x}, \ \vec{x}+M_1 \vec{d}, \ \ldots, \ \vec{x}+M_{r-1} \vec{d}$ with $\vec{x},\vec{d}$ ranging over $(\Fqn)^k$. Throughout the argument we write $\vec{x} \in (\Fqn)^k$ as $\vec{x}=\begin{pmatrix} \vec{x}^{(1)}, \cdots, \vec{x}^{(k)} \end{pmatrix}^T$ where $\vec{x}^{(i)} \in \Fqn$ and has components $\vec{x}^{(i)}(1),\ldots,\vec{x}^{(i)}(n)$. Vectors $\vec{v}_1,\ldots,\vec{v}_r$ belong to the $r$-point pattern generated by $M_1,\ldots,M_{r-1}$ if and only if there exists $\vec{d} \in (\Fqn)^k$ such that $\vec{v}_i-\vec{v}_1 = M_{i-1} \vec{d}$ for each $2 \leq i \leq r$. 
This occurs if and only if $\vec{v}_{i}-\vec{v}_1 = M_{i-1}M_1^{-1}(\vec{v}_2-\vec{v}_1)$ for each $2 \leq i \leq r$. Consider the tensor $T:A^{r} \to \mathbb{F}_q$ given by $\displaystyle T(\vec{v_1},\ldots,\vec{v}_{r}) = \prod_{i=3}^{r} \prod_{j=1}^k \prod_{\ell=1}^n \left(1 -  f_{i,j,\ell}^{q-1}\right)$
where 
\[
f_{i,j,\ell} = \vec{v}_{i}^{(j)}(\ell) - \vec{v}_{1}^{(j)}(\ell) - \left(\sum_{s=1}^k (M_{i-1}M_1^{-1})_{j,s} \cdot \left(\vec{v}_2^{(s)}-\vec{v}_1^{(s)}\right) \right)(\ell) 
\]
Observe that $T$ is a polynomial in the $rkn$-many variables $\{\vec{v}_{i}^{(k)}(\ell)\}_{i,k,\ell}$. Furthermore, $T=1$ if and only if $\vec{v}_{i}-\vec{v}_1 = M_{i-1}M_1^{-1}(\vec{v}_2-\vec{v}_1)$ for each $2 \leq i \leq r$, and $0$ otherwise. Now suppose $T(\vec{v}_1,\ldots,\vec{v}_{r}) \neq 0$. Then $\vec{v}_1,\ldots,\vec{v}_r$ can not be distinct since $A^r$ evades the given $r$-point pattern. So, $\vec{v}_i=\vec{v}_j$ for some $i \neq j$. Letting $M_0$ be the $k \times k$ zero matrix in $\Fq^{k \times k}$, this implies there is a vector $\vec{d}$ for which $\vec{x}+M_{i-1}\vec{d}=\vec{x}+M_{j-1}\vec{d}$. This means $\vec{d} \in \ker(M_{i-1}-M_{j-1})$ but  $M_{i-1}-M_{j-1}$ has trivial kernel so this implies $\vec{d}=\vec{0}$ and hence $\vec{v}_1=\vec{v}_2=\cdots=\vec{v}_{r}$. Altogether then, $T$ is diagonal on $A^{r}$ with nonzero diagonal entries (in fact all diagonal entries are $1$) and therefore $|A|= \mbox{slice-rank}(T)$. To bound $\mbox{slice-rank}(T)$ we start by noticing that $T$ is a degree $kn(r-2)(q-1)$ polynomial in the components of $\vec{v}_1,\ldots,\vec{v}_{r}$. Any monomial in the expansion of $T$ is of the form
\[
C \cdot \prod_{i=1}^r \prod_{j=1}^k \prod_{\ell=1}^n \left(\vec{v}_i^{(j)}(\ell)\right)^{e_{i,j,\ell}}
\]
for some constant $C$ where $1 \leq e_{i,j,\ell} \leq q-1$. We can assume any $e_{i,j,\ell}$ is at most $q-1$ because for all $x \in \Fq$ we have $x^q=x$. Let $e_1,e_2,\ldots,e_r$ be defined by $e_i=\sum_{j=1}^k \sum_{\ell=1}^n e_{i,j,\ell}$. Then there is an $i$ with $1 \leq i \leq r$ so that $e_i \leq kn(r-2)(q-1)/r$ because $\sum_{i=1}^r e_i =kn(r-2)(q-1)$. As a consequence, $\mbox{slice-rank}(T)$ is at most 
\[
r \cdot \left| \left \{ e \in \{0,1,\ldots,q-1\}^{k \times n} \ : \sum_{j=1}^{k} \sum_{\ell=1}^n e_{j,\ell} \leq  \ \dfrac{kn(r-2)(q-1)}{r} \right \} \right|.
\]
Letting $m=\frac{r}{r-2}$, we get that this is at most
\[
r \cdot \min_{0<x<1} \left( \frac{1-x^q}{x^{\frac{q-1}{m}}(1-x)} \right)^{kn}
\]
by standard probabilistic arguments as in Lemma 5 of \cite{naslund2020exponential}.  This quantity is less than $r \cdot c^{kn}$ for some $c$ with $1 \leq c < q$ provided that $m>2$, which occurs only when $r=3$. In this case, the upper bound above is $3 \cdot \min_{0<x<1} \left( \frac{1-x^q}{x^{\frac{q-1}{3}}(1-x)} \right)^{kn}$. Letting $c_q= \min_{0<x<1} \frac{1-x^q}{x^{\frac{q-1}{3}}(1-x)}$, it was proven in Proposition 4.12 of \cite{blasiak2016cap} that $0.8414q \leq c_q \leq 0.9184q$, and the result follows.
\end{proof}
Theorem 3 in \cite{kovac} due to Kova\u{c} follows from our main theorem when $q$ is odd.
\begin{corollary}\label{cor:isos}
For every odd prime power $q$ there is a number $c_q \in (0,q)$ such that the following holds for every positive integer $n$: if a set $A \subseteq (\Fqn)^2$ does not contain a triple of distinct points 
\[
\begin{pmatrix} a \\ b \end{pmatrix}, \ \begin{pmatrix} a+m \\ b+n \end{pmatrix}, \begin{pmatrix} a-n \\ b+m \end{pmatrix}
\]
with $a,b,m,n \in \Fqn$, then its cardinality needs to satisfy the bound $|A| \leq 3 c_q^{2n}$.
\end{corollary}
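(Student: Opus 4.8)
The plan is to observe that the forbidden configuration is exactly the full-rank three-point pattern generated by the matrices in~(\ref{eqn:generators}), so that the corollary is an immediate specialization of Theorem~\ref{thm:main} with $k=2$. First I would rewrite the triple: setting $\vec{x} = \begin{pmatrix} a \\ b \end{pmatrix}$ and $\vec{d} = \begin{pmatrix} m \\ n \end{pmatrix}$, one has $\vec{x} + M_1 \vec{d} = \begin{pmatrix} a+m \\ b+n \end{pmatrix}$ and $\vec{x} + M_2 \vec{d} = \begin{pmatrix} a-n \\ b+m \end{pmatrix}$ with $M_1 = \begin{pmatrix} 1 & 0 \\ 0 & 1 \end{pmatrix}$ and $M_2 = \begin{pmatrix} 0 & -1 \\ 1 & 0 \end{pmatrix}$. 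As $\vec{x},\vec{d}$ range over $(\Fqn)^2$ these are precisely the triples in the hypothesis, and the requirement that the three points be distinct is exactly the non-triviality condition in the definition of a three-point pattern; equivalently, $A$ avoids this pattern if and only if it contains no such triple of distinct points.

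Next I would check the full-rank condition, which is the one place oddness of $q$ is used. We have $\det M_1 = 1$ and $\det M_2 = 1$, so $M_1$ and $M_2$ are invertible, and $M_1 - M_2 = \begin{pmatrix} 1 & 1 \\ -1 & 1 \end{pmatrix}$ has determinant $2$, which is a nonzero element of $\Fq$ precisely because $q$ is odd; hence $M_1 - M_2$ is invertible as well. Therefore the pattern generated by $M_1,M_2$ is a full-rank three-point pattern (and, as noted in the introduction, it encompasses exactly the right-angled isosceles triangles in $(\Fqn)^2$).

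Finally I would apply Theorem~\ref{thm:main} with $k=2$ to conclude that any $A \subseteq (\Fqn)^2$ avoiding this pattern satisfies $|A| \leq 3 c_q^{2n}$ with $0.8414q \leq c_q \leq 0.9184q$; in particular $c_q \in (0,q)$, which is the statement of the corollary. There is no real obstacle beyond this translation: all the analytic content lives in Theorem~\ref{thm:main}, and the only thing specific to this corollary is the elementary determinant computation $\det(M_1 - M_2) = 2 \neq 0$ in odd characteristic.
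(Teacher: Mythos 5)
Your proposal is essentially the paper's own proof: identify the triple as the three-point pattern generated by $M_1 = I$ and $M_2 = \begin{pmatrix} 0 & -1 \\ 1 & 0 \end{pmatrix}$, verify full rank via the determinants $1$, $1$, $2$ (nonzero since $q$ is odd), and invoke Theorem~\ref{thm:main} with $k=2$. The reasoning and the level of detail match the paper's, so there is nothing to flag.
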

\begin{proof}
The configuration given is the three-point pattern in $(\Fqn)^2$ generated by the matrices $M_1 = \begin{pmatrix} 1 & 0 \\ 0 & 1 \end{pmatrix}, \ M_2 = \begin{pmatrix} 0 & -1 \\ 1 & 0 \end{pmatrix}$. Since $M_1,M_2$ and $M_1-M_2$ have determinants $1,1,2$ respectively and $q$ is odd, the corresponding matrices are invertible in $\Fq^2$. The result follows by Theorem~\ref{thm:main}.
\end{proof}

We can extend Corollary~\ref{cor:isos} to configurations of triples that have differing geometries by mimicking Euclidean constructions. For instance, suppose $q=7$. Then we can mimic the quantity $\frac{\sqrt{2}}{2}$ by $3 \cdot 2^{-1}=5$ because $3$ is a square root of $2$. Now consider the three-point pattern in $(\Fqn)^2$ generated by $M_1=\begin{pmatrix} 1 & 0 \\ 0 & 1 \end{pmatrix}, \ M_2 = \begin{pmatrix} 5 & -5 \\ 5 & 5 \end{pmatrix}$. The matrix $M_2$ mimics rotation by $45$ degrees. More precisely, if 

\[\begin{pmatrix} x \\ y \end{pmatrix}, \ \begin{pmatrix} x \\ y \end{pmatrix} + M_1 \begin{pmatrix} m \\ n \\ \end{pmatrix}, \begin{pmatrix} x \\ y \end{pmatrix} + M_2 \begin{pmatrix} m \\ n \\ \end{pmatrix}\]
are three points in the three-point pattern then the spread at the first vector is 
\[
1 - \frac{\left(m \cdot (5m-5n)+n \cdot (5m+5n)\right)^2}{\sqrt{m \cdot m + n \cdot n} \sqrt{(5m-5n) \cdot (5m-5n) + (5m+5n) \cdot (5m+5n)}}
\]
which simplifies to $1-5^2$. The spread between two vectors is the finite field analogue of the square of the sine between them (see \cite{lund2018distinct} for instance for details), and so in this way the three-point pattern in $(\mathbb{F}_7^n)^2$ generated by $M_1$ and $M_2$ consists of triples that are  analogues of isosceles triangles where the angle subtended between the sides with equal lengths is $\theta=45$ degrees. By Theorem~\ref{thm:main} we see that subsets avoiding such geometric figures must therefore be exponentially small. We can generalize this beyond $q=7$ to any $q$ for which $2$ is a square. 

We can further extend this geometric analogy to other angles. For example if $q=11$ then $6$ is a square root of $3$, and so rotation by $60$ degrees in $(\mathbb{F}_{11}^n)^2$ can be mimicked by applying the matrix $\begin{pmatrix} 6 & 8 \\ 3 & 6 \end{pmatrix}$. Because of this, subsets of $(\mathbb{F}_{11}^n)^2$ with no equilateral triangles are exponentially small since they must avoid the full-rank three-point pattern generated by $M_1=\begin{pmatrix} 1 & 0 \\ 0 & 1 \end{pmatrix}, \ M_2=\begin{pmatrix} 6 & 8 \\ 3 & 6 \end{pmatrix}$. We prove this for general $q$ when $3$ has a square root in $\Fq$.

\begin{corollary}\label{cor:equilateral}
Let $q$ be an odd prime power and suppose $3$ is a (non-zero) square in $\mathbb{F}_q$. If $A \subseteq (\mathbb{F}_q^n)^2$ contains no equilateral triangle then $|A| \leq 3 c_q^{2n}$ where $0.8414q \leq c_q \leq 0.9184q$.
\end{corollary}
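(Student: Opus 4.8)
The plan is to realize the family of all equilateral triangles in $(\Fqn)^2$ as a single full-rank three-point pattern and then invoke Theorem~\ref{thm:main}. Since $q$ is odd, $2$ is invertible in $\Fq$; put $t = 2^{-1}$. Since $3$ is a non-zero square in $\Fq$, fix $s \in \Fq$ with $s^2 = 3$. Take
\[
M_1 = \begin{pmatrix} 1 & 0 \\ 0 & 1 \end{pmatrix}, \qquad M_2 = \begin{pmatrix} t & -st \\ st & t \end{pmatrix},
\]
the finite-field avatar of a rotation by $60$ degrees exhibited for $q=11$ in the discussion above.

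First I would verify that this is a full-rank three-point pattern, that is, that $M_1$, $M_2$ and $M_1 - M_2$ are invertible over $\Fq$. This is a one-line determinant computation: $\det M_1 = 1$, $\det M_2 = t^2(1+s^2) = 4t^2 = 1$, and $\det(M_1 - M_2) = (1-t)^2 + s^2 t^2 = (1-t)^2 + 3t^2 = \tfrac14 + \tfrac34 = 1$, all nonzero using $t = \tfrac12$ and $s^2 = 3$.

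Next I would show that every non-trivial triple $\vec{x},\, \vec{x}+M_1\vec{d},\, \vec{x}+M_2\vec{d}$ in the pattern is an equilateral triangle. Identify $(\Fqn)^2$ with $\Fq^{2n}$ carrying the quadratic form $Q(\vec v) = \sum_i v_i^2$, whose associated squared distance between $\vec p$ and $\vec q$ is $Q(\vec p - \vec q)$. The crux is the observation that a block matrix of the shape $\begin{pmatrix} a & -b \\ b & a \end{pmatrix}$, which sends $(\vec d^{(1)}, \vec d^{(2)})$ to $(a\vec d^{(1)} - b\vec d^{(2)},\, b\vec d^{(1)} + a\vec d^{(2)})$, scales $Q$ by the factor $a^2 + b^2$: expanding $Q$ of the image, the cross terms $\pm 2ab\langle \vec d^{(1)}, \vec d^{(2)}\rangle$ cancel and one is left with $(a^2+b^2)\, Q(\vec d)$. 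Applying this to $M_2$, where $a^2 + b^2 = t^2 + s^2 t^2 = 4t^2 = 1$, and to $M_1 - M_2$, where $a^2 + b^2 = (1-t)^2 + s^2 t^2 = 1$, shows that the three pairwise squared distances
\[
Q(\vec d), \qquad Q(M_2\vec d), \qquad Q\big((M_1 - M_2)\vec d\big)
\]
are all equal to $Q(\vec d)$. Hence any three distinct points of the pattern are pairwise equidistant, i.e.\ they form an equilateral triangle.

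Finally, if $A \subseteq (\Fqn)^2$ contains no equilateral triangle then $A$ in particular avoids the full-rank three-point pattern generated by $M_1$ and $M_2$, so Theorem~\ref{thm:main} gives $|A| \le 3 c_q^{2n}$ with $0.8414q \le c_q \le 0.9184q$. The only step that needs genuine care is the conformality computation above — verifying that the off-diagonal cross terms cancel and that both scaling constants $a^2 + b^2$ collapse to $1$ — together with pinning down that here "equilateral triangle" means three distinct points pairwise equidistant in the form $Q$, the required distinctness being exactly the non-triviality already built into the definition of the three-point pattern.
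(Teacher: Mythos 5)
Your proof is correct and is essentially the same as the paper's: you take the identical matrices $M_1 = I$ and $M_2 = \begin{pmatrix} t & -st \\ st & t \end{pmatrix}$ (the paper's $b,a$ are your $t,s$), verify the same three determinants equal $1$, and check pairwise equidistance by the same computation. The only cosmetic difference is that you phrase the distance check as a general observation about $\begin{pmatrix} a & -b \\ b & a \end{pmatrix}$ scaling $Q$ by $a^2+b^2$, whereas the paper expands each squared length directly; this is a nice clean packaging but not a different argument.
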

\begin{proof}
 Let $a \in \Fq$ be such that $a^2=3$ and let $b$ be the inverse of $2$. Consider the three-point pattern
\[
\begin{pmatrix} x \\ y \end{pmatrix}, \ \begin{pmatrix} x \\ y \end{pmatrix} +  \begin{pmatrix} 1 & 0 \\ 0 & 1 \end{pmatrix} \begin{pmatrix} m \\ n \end{pmatrix}, \ \begin{pmatrix} x \\ y \end{pmatrix} + \begin{pmatrix} b & -ab \\ ab & b \end{pmatrix} \begin{pmatrix} m \\ n \end{pmatrix}.
\]
First note that this is a full-rank three-point pattern. Indeed the determinant of the latter matrix is $b^2(a^2+1)=1$ and the determinant of the difference of the matrices is $a^2b^2+(b-1)^2=b^2(a^2+1)-2b+1=1$. Now the square of the length between the first and second vector is $m \cdot m + n \cdot n$. The square of the length between the first and third vector is $(bm-abn)\cdot(bm-abn)+(abm+bn)\cdot(abm+bn)=b^2(a^2+1)(m \cdot m+n \cdot n)$ which is $m \cdot m + n \cdot n$ since $b^2(a^2+1)=1$. Finally the square of the length between the second and third vector is $((b-1)m-abn)\cdot ((b-1)m-abn)+(abm+(b-1)n) \cdot (abm+(b-1)n)=(a^2b^2+(b-1)^2)(m \cdot m+n \cdot n)$ which again is $m \cdot m + n \cdot n$ since $a^2b^2+(b-1)^2=1$. Subsequently if $A$ contains no equilateral triangle then it avoids the given full-rank three-point pattern and the result follows by Theorem~\ref{thm:main}.
\end{proof}


\section{Conclusion}

The most natural follow-up question is whether our main theorem extends beyond three-point patterns. In Berger et. al \cite{berger2022popular}, it was shown that popular differences are not guaranteed for four point patterns unless extra conditions are placed on the generating matrices besides being invertible and having their differences invertible. We suspect conditions of a similar kind would be required to assert that sets avoiding $r$-point patterns for $r \geq 4$ are exponentially small. Furthermore, our proof of Theorem~\ref{thm:main} highly suggests that for such patterns, a technique different than the slice-rank method will be needed.

\bibliography{references}{}
\bibliographystyle{plain}

\end{document}